  \theoremstyle{plain}
  \newtheorem{thm}{Theorem}
  \newtheorem*{clm*}{Claim}
  \newtheorem*{lem*}{Lemma}
  \newtheorem{lem}{Lemma}
\newcommand{\ZZ}{\mathbb{Z}}
\newcommand{\PP}{\mathbb{P}}
\newcommand{\Cl}{\mathcal{C}}
\newcommand{\Nn}{\mathcal{N}}
\newcommand{\parv}{\partial_{\textrm v}}
\begin{document}

\title{Uniqueness of percolation on products with \texorpdfstring{$\mathbb Z$}{Z}}

\author{Itai Benjamini and Gady Kozma}

\begin{abstract}
We show that there exists a connected graph $G$ with subexponential volume growth such that
critical percolation on $G\times\mathbb{Z}$ has infinitely many infinite
clusters. We also give some conditions under which this cannot occur.
\end{abstract}

\maketitle

This paper begins with the observation that if $G$ is any connected
graph and $p$ is any number in $[0,1]$, then the number of infinite
clusters in $p$-percolation\footnote{For background on percolation see
  \cite{G99} or \cite{LP}.} on $G\times \mathbb{Z}$ is deterministic,
and is either 0, 1 or $\infty$. The proof is an easy consequence of
the fact that one can take any finite set of vertices and translate it
along the $\mathbb{Z}$ axis and get a set of variables disjoint from
the one you started with. 

%The footnotes are the one giving the font substitution errors - I'm
%not going to fix this, it's ridiculous.

In view of this, Sznitman asked\footnote{Personal communication.} whether the Burton-Keane argument
\cite{BK89} applies. Namely, assume $G$ is amenable, does it follow
that $G\times \mathbb{Z}$ has only finitely many infinite clusters?
The definition of amenability used here is that the Cheeger constant
is 0, namely, for every $\epsilon>0$ there is some finite set of
vertices $A$ such that $|\partial A|\le \epsilon |A|$ where $\partial
A$ is the edge boundary of $A$.

As stated the answer is no. A binary tree with a infinite
path added at the root serves as a counterexample.  We suggest a
slight modification.

\medskip
Say that $G$ is {\it strongly amenable} if $G$ contains no nonamenable subgraph.
\medskip

Assume $G$ is strongly amenable, can
one find an interval $[p_1,p_2]$ such that percolation on $G\times
\mathbb{Z}$ has infinitely-many infinite clusters for every $p$ in
this interval? What if we further assume that $G$ has polynomial
volume growth? 

Our main result is to construct an example of a strongly amenable
graph of the form $G\times\mathbb{Z}$ with non uniqueness at $p_c$. We
do not see yet any example of such a graph in which no percolation
occurs at $p_c$, but non uniqueness occurs for some $p>p_c$.

It is tempting to reformulate this question as $p_c=p_u$ (where
$p_u$ is the threshold for uniqueness, see
\cite{LP} for the precise definition) but there is no monotonicity of
uniqueness for graphs of the type $G\times \mathbb{Z}$. Indeed,
connect the root of a $\mathbb{Z}^{99}$ lattice to the root of a $10$
regular tree $T$, denote this graph by $G$. The parameters were chosen
so as to satisfy
\[
p_c(\mathbb{Z}^{100})<p_c(T\times\mathbb{Z})<p_u(T\times\mathbb{Z})
\]
(the first inequality follows from the fact that $p_c(\ZZ^d)\le C/d$,
see \cite{K90} or \cite[\S 4]{ABS04}; and from the bound
$p_c(T\times\mathbb{Z}) \ge \frac{1}{11}$ which holds for any graph
with degree $12$. The second inequality follows from \cite{S01}).
It is not hard to see that on $ G \times \mathbb{Z}$ for small $p$ no
percolation occurs. Then between $p_c(\mathbb{Z}^{100})$ and
$p_c(T\times \mathbb{Z})$ there is a unique infinite cluster. Between
$p_c(T\times\mathbb{Z})$ and $ p_u(T \times \mathbb{Z})$ there are infinitely many infinite clusters. Finally, above $ p_u(T \times \mathbb{Z})$ again one
has a unique infinite cluster. This example can be generalized to an
arbitrary, even infinite number of transitions.

Regarding strong amenability we ask the following.
There is no known example of an exponentially growing Cayley graph which is strongly amenable
and the existence of such is still open, (see \cite{CT08} for recent related work and a review of what is known
for groups).
A graph $G$ has uniform growth if all balls
with the same radius have the same size up to a fixed multiplicative constant.
Is there a graph with a uniform exponential growth which is strongly amenable?

This notes has two results on this problem. The first is a counterexample:
\begin{thm}\label{thm:cntr}
There exists a connected graph $G$ with subexponential volume growth such that
critical percolation on $G\times\mathbb{Z}$ has infinitely many infinite
clusters.\end{thm}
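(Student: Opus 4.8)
The plan is to take $G$ to be a rooted spherically symmetric tree in which branchings are rare but increasingly profuse, so that $G\times\mathbb{Z}$ becomes a ``tree of long strips of $\mathbb{Z}^2$'' of subexponential growth on which one can combine planar criticality with tree-like branching. Concretely, fix increasing sequences $a_1<a_2<\cdots$ with $a_k\to\infty$ (gap lengths) and $d_1,d_2,\dots$ (branching numbers), and let $G$ have its $k$-th branch level at depth $\ell_k=a_1+\cdots+a_k$, at which every vertex has $d_k$ children, while between consecutive branch levels $G$ is a disjoint union of paths. Since $|B_G(\ell_k)|\asymp(d_1\cdots d_k)\,\ell_k$, taking for instance $a_k=k$ and letting $d_k$ grow only polynomially gives $\log|B_G(r)|=O(\sqrt r\log r)$, so $G$ — and hence $G\times\mathbb{Z}$, whose balls are only a factor $\asymp r$ larger — has subexponential volume growth (in particular $G\times\mathbb{Z}$ is strongly amenable, since subexponential growth passes to every subgraph). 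Also $G\times\mathbb{Z}$ contains a half-plane (a ray of $G$ times $\mathbb{Z}$), so $p_c(G\times\mathbb{Z})\le\tfrac12$.

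The core step is a renormalization of critical percolation ($p=\tfrac12$) onto $G$. The tube of $G\times\mathbb{Z}$ over a path joining two consecutive branch levels is exactly a strip $\{0,\dots,a_k\}\times\mathbb{Z}$ of $\mathbb{Z}^2$, and by Russo--Seymour--Welsh at $p_c$ (only the power-law bounds are needed, not exact exponents) a prescribed vertex on one side of such a tube connects across it with probability $\ge c\,a_k^{-\beta}$ for a fixed $\beta>0$. Because the $d_k$ child-tubes at a branch vertex are edge-disjoint, the number of children into which the cluster crosses dominates a $\mathrm{Binomial}(d_k,c\,a_k^{-\beta})$ variable, so choosing $d_k\gg a_k^{\beta}$ makes these means tend to infinity: the induced process on $G$ is a supercritical (indeed exploding) branching process and survives with positive probability. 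Hence at $p=\tfrac12$ an infinite cluster exists with positive probability, and therefore — by the zero--one--infinity law for $G\times\mathbb{Z}$ noted at the start of the paper — almost surely. Conversely, for $p<\tfrac12$ a width-$a_k$ tube is crossed with probability $e^{-\Theta(a_k)}$; since $d_k$ is only polynomial in $a_k$ the same renormalized process then has offspring means $\to0$ and dies, and since $F\times\mathbb{Z}$ has $p_c=1$ for every finite tree $F$ (ruling out infinite clusters with bounded projection to $G$) there is no percolation. Thus $p_c(G\times\mathbb{Z})=\tfrac12$.

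Finally, to get infinitely many infinite clusters at $p_c=\tfrac12$ it suffices, by the zero--one--infinity law again, to exclude uniqueness. Take two incomparable deep branch vertices; the subtrees they span are vertex-disjoint, and by the previous step each of the corresponding products $T_i\times\mathbb{Z}$ independently contains an infinite open cluster with probability $\ge c_0>0$. These can lie in one cluster of $G\times\mathbb{Z}$ only through an open connection running up to their common ancestor and back down, hence crossing long critical tubes transversally from a localized source; a multiscale estimate exploiting the polynomial smallness of such crossings at $p_c$ shows the reconnection fails with probability bounded below, so two disjoint infinite clusters coexist with positive probability. I expect this last step to be the real obstacle: the necks $\{v\}\times\mathbb{Z}$ of $G\times\mathbb{Z}$ are infinite, so one cannot simply wall off a subtree, and the suppression of reconnection must be obtained quantitatively; arranging it to be compatible with percolation occurring \emph{exactly} at $p_c$ (rather than only above it) is what forces the delicate calibration of the sequences $(a_k)$ and $(d_k)$.
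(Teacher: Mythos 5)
Your construction (sparse branch levels separated by long necks, so that $G\times\mathbb{Z}$ is a tree of two-dimensional strips), the growth computation, and the renormalized branching-process argument for existence of percolation at the critical point all run parallel to the paper's scheme, and that part of the proposal is fine in spirit. The gap is exactly where you flag it: you have no argument for non-uniqueness, and this is the actual content of the theorem. Your sketch --- two incomparable subtrees each percolating with probability $\ge c_0$, plus ``a multiscale estimate'' showing the reconnection through the common ancestor fails with positive probability --- is not an argument: the two putative clusters can attempt to merge through infinitely many disjoint regions (anywhere along the infinite necks $\{v\}\times\mathbb{Z}$ and at arbitrary depth in both subtrees), the non-connection event is not a product of local events, and no quantitative estimate is offered. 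In fact, conditional on both subtree products percolating, one should expect many long open fibres in each, and nothing you prove prevents them from hooking up with probability $1$.

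The paper's route makes clear why this step is delicate and why your choice of $2$D strips is likely fatal rather than merely unfinished. Non-uniqueness is proved there by the Benjamini--Schramm comparison: assuming a unique infinite cluster, FKG and a uniform lower bound on $\mathbb{P}((x,0)\text{ is in an infinite cluster})$ force $\mathbb{P}((x,0)\leftrightarrow(y,0))\ge c$ uniformly in far-apart good vertices, and this is contradicted by dominating the cluster by a \emph{transient} branching random walk, whose visiting probabilities tend to $0$. Transience hinges on a quantitative input: the expected number of ``extra'' offspring produced along a $\mathbb{Z}$-fibre inside a bottleneck, which is controlled by $\sum_{n\neq 0}\mathbb{P}_{p_c}(\vec 0\leftrightarrow(0,\dots,0,n))$; the paper proves (via lace-expansion results of Heydenreich--van der Hofstad--Sakai) that this sum is $O(1/\sqrt d)$ in high dimension, and explicitly notes that in $d=2$ the analogous sum diverges. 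In your strips of width $a_k$ at $p=\tfrac12$, the corresponding sum of critical two-point functions along the fibre direction is large (growing with $a_k$), so the dominating process is not transient and the decay $\mathbb{P}((x,0)\leftrightarrow(y,0))\to 0$ that drives the contradiction is not available. So to complete the proof you would either need a genuinely new mechanism to kill reconnection in the planar-strip geometry, or you should replace the strips by high-dimensional bottlenecks (copies of $\mathbb{Z}^d$, $d$ large, with connection probability bounds at criticality such as Hara's $\ge c\,n^{2-d}$ and $\le C\,n^{2-d}$) exactly as the paper does, at which point you are reproducing its proof.
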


The second is a positive result, a family of graphs $G$ for which we
can prove that $G\times\ZZ$ does not have infinitely-many infinite
clusters at any $p$. The result is not very satisfying, and calls for strengthening.
\begin{thm}\label{thm:cutsets}Let $G$ be a connected graph such that each finite set can be
  disconnected from infinity by removing a bounded number of
  edges. Then $G\times\ZZ$ does not have infinitely-many infinite
  components.
\end{thm}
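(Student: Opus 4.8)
The plan is to rule out only the value $\infty$ for the (a.s.\ constant) number of infinite clusters, since the introductory observation already restricts that number to $\{0,1,\infty\}$. Fix $o\in G$ and let $K$ be a bound such that every finite subset of $G$ can be separated from infinity by deleting at most $K$ edges. Applying this to the balls $B_G(o,n)$ and keeping, at stage $n$, the set $S_n$ of vertices on the $o$-side of a minimal such cut, one gets an increasing exhaustion $\{S_n\}$ of $G$ by finite connected sets with $|\partial S_n|\le K$. (Structurally this means $G$ is a finite core carrying at most $K$ thin ends, each end a chain of finite blocks joined by bundles of at most $K$ edges; only the bound on the cuts will be used below.) First I would observe that this forces $G\times\ZZ$ to be amenable: the sets $B_{n,m}:=S_n\times[-m,m]$ exhaust $G\times\ZZ$, and each edge of $\partial B_{n,m}$ is either one of the $2|S_n|$ vertical edges leaving the top or bottom face, or lies above one of the at most $K$ edges of $\partial S_n$, so that
\[
|\partial B_{n,m}|\le 2|S_n|+K(2m+1),\qquad |B_{n,m}|=|S_n|(2m+1);
\]
hence the isoperimetric ratio is at most $\tfrac{2}{2m+1}+\tfrac{K}{|S_n|}\to0$ as $m,n\to\infty$.

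Granting amenability I would run the Burton--Keane argument, but along the particular F{\o}lner sets $B_{n,m}$. Recall its deterministic half: in any configuration the number of trifurcation points inside a finite vertex set $B$ --- points $x$ lying in an infinite cluster $C$ for which $C\setminus\{x\}$ has at least three infinite components --- is at most $|\partial B|$. On the probabilistic side the shift $(v,k)\mapsto(v,k+1)$ is a measure-preserving graph automorphism of $G\times\ZZ$, so $\delta_v:=\PP\big((v,0)\ \text{is a trifurcation}\big)$ depends only on $v$. Taking expectations of the combinatorial bound over $B_{n,m}$ gives $(2m+1)\sum_{v\in S_n}\delta_v\le 2|S_n|+K(2m+1)$, and letting first $m\to\infty$ and then $n\to\infty$ yields $\sum_{v\in V(G)}\delta_v\le K$. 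In the other direction, if there were infinitely many infinite clusters then the third closest of them to $(o,0)$ would lie at finite distance a.s., so some fixed finite ball around $(o,0)$ meets at least three infinite clusters with positive probability; a finite-energy modification inside that fixed ball wires them together through $(o,0)$, showing $\delta_o>0$.

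The step I expect to be the main obstacle is improving ``$\delta_o>0$'' to ``$\sum_v\delta_v=\infty$'', which contradicts the bound above. Two observations confine the problem. First, if $C$ is an infinite cluster whose projection $\pi(C)\subseteq V(G)$ is finite, then $C\subseteq F\times\ZZ$ with $F=\pi(C)$, so $C$ is a cluster of i.i.d.\ percolation on the quasi-transitive amenable graph $G[F]\times\ZZ$ and hence, by Burton--Keane applied there, a.s.\ carries no trifurcation; thus every trifurcation sits on a cluster that runs out to infinity in $G$, i.e.\ out one of the at most $K$ thin ends. Secondly, if there are infinitely many infinite clusters, then pigeonholing over the ends, one thin end is entered by infinitely many distinct infinite clusters, so arbitrarily deep along it a bounded box meets three infinite clusters with positive probability and a finite-energy modification there creates a trifurcation; this gives $\delta_{v_j}>0$ for some sequence $v_j\to\infty$ along that end. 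What is missing is uniformity: since $G\times\ZZ$ is not quasi-transitive there is no translation pinning $\delta$ down along the end, and the cost of the local modification is not a priori bounded below as $j\to\infty$, so one obtains infinitely many nonzero terms but not a divergent sum. Turning this into a divergence --- presumably by exploiting the chain-of-finite-blocks structure of a thin end, so that each block through which the relevant clusters pass contributes a definite amount of trifurcation probability --- is the heart of the matter, and is no doubt why the authors call the result unsatisfying.
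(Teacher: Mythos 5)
Your reduction to excluding the value $\infty$, the F{\o}lner computation for $G\times\ZZ$, and the Burton--Keane upper bound $\sum_{v\in V(G)}\delta_v\le K$ are all sound. But, as you say yourself, the argument then stalls: you produce $\delta_o>0$ and infinitely many vertices $v_j$ with $\delta_{v_j}>0$, yet the inequality $\sum_v\delta_v\le K$ is perfectly consistent with infinitely many positive terms, so no contradiction results. This is a genuine gap, not a technicality. The whole difficulty of the theorem is that $G\times\ZZ$ is not quasi-transitive, so the classical scheme ``positive density of trifurcations contradicts a sublinear boundary'' has no density to speak of in the $G$-direction, and I do not see how to repair it along the lines you sketch: making each block of a thin end contribute a definite amount of trifurcation probability would require a finite-energy modification whose cost is uniformly bounded along the end, which is exactly what is unavailable.

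The paper's proof avoids the issue by never leaving a single column. Fix $v\in G$ and count only the infinite clusters meeting $\{v\}\times\ZZ$; this count is invariant under the (transitive) $\ZZ$-action on the column, hence a.s.\ constant, and finite values $\ge 2$ are excluded by the usual modification. Two cases remain. Clusters meeting the column at only finitely many points are ruled out by Lemma~\ref{lem:nofinite}, which is where the bounded-cutset hypothesis is used in an essential way beyond amenability: nested sets $Q_i$ with $|\partial Q_i|\le K$ give disjoint events forcing any such cluster to meet $\parv Q_i\times\ZZ$ in more than $L$ points, and ergodicity of the translations by $2r\ZZ$ then packs more than $K$ points per unit length of $\ZZ$ into $\parv Q_i\times[2r-N,2ar+N]$, which is impossible. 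Clusters meeting the column infinitely often are handled by the device of \cite{BLPS99}: trifurcation points \emph{on the column} exist with positive probability, the mass-transport principle on $\ZZ$ organizes them into a $3$-regular tree indexed by points of $\ZZ$, and a $3$-regular tree cannot have positive density inside intervals of $\ZZ$. The density argument thus lives entirely on the transitive amenable graph $\ZZ$, sidestepping the non-transitivity of $G$ that blocks your route.
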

In other words, we require from $G$ that there exists some constant
$K$ such that for every finite set of vertices $A$ one can find $K$ edges
$e_1(A),\dotsc,e_K(A)$ such that removing these edges will make all the
components of all $v\in A$ finite.

\section*{Proof of theorem \ref{thm:cntr}}
%The noindent solves overfull hbox problems...
\noindent Let $d$ be some sufficiently large number to be fixed
later. The graph is constructed as follows. Take a tree of degree
$4d$. Let $l_{1}=1$ and $l_{n+1}=l_{n}+\left\lceil d^{2}\log (n+1)\right\rceil $.
Now, for each $n\ge n_{0}$ ($n_{0}$ to be fixed later too, depending
on $d$) and for
each edge $(x,y)$ where $x$ is in level $l_{n}-1$ and $y$ is in
level $l_{n}$, disconnect $(x,y)$ and instead take a copy of $\mathbb{Z}^{d}$
(considered as a graph with the usual structure) and
connect $x$
with the vertex $(0,\dotsc,0)$ and $y$ with the vertex $(n,\dotsc,n)$.
All copies of $\mathbb{Z}^d$ (for all such $(x,y)$) are disjoint.
This terminates the definition of the graph $G$.

We will show that at $p=p_{c}(\mathbb{Z}^{d+1})$ the graph $G\times\mathbb{Z}$
has infinitely many infinite clusters. One can rather easily convince
oneself that in fact below $p$ our graph $G\times\mathbb{Z}$ has no infinite
clusters, so $p=p_{c}(G\times\mathbb{Z})$, but we will not do it
here. Note that $p=(1+o(1))/2d$ where $o(1)$ is as $d\to\infty$ \cite{K90}.

\subsection*{Subexponential growth}
Examine the ball $B$ of radius $r$ around the root of the tree we started
with. Now, for any $x$ in level $l_n$ of the tree, $d(x,0)\approx n^2$ because the
shortest path wastes $k$ steps between levels $l_{k}-1$ and $l_k$ for each $k<n$. Therefore $B$
contains tree elements up to level $l_h$ for $h\approx
\sqrt{r}$. Since $l_{h}\approx \sqrt{r}\log r$ we get that $B$ contains $\le
\exp(C\sqrt{r}\log r)$ tree vertices. The non-tree vertices of $B$ are
contained in $\le \exp(C\sqrt{r}\log r)$ copies of a $d$-dimensional
ball of radius $r$, so all in all we get
\[
|B|\le Cr^de^{C\sqrt{r}\log r}\le Ce^{C\sqrt{r}\log r}
\]
which is subexponential, as needed.

\subsection*{Existence}
We now turn to show that there are infinite clusters. Let $\gamma$ be
some path in $G\times\mathbb{Z}$. We say that $\gamma$ is ``between
levels $l_{n-1}$ and $l_n$'' if for each vertex $(v,n)$ of $\gamma$,
either $v$ is in the tree, and its level is between $l_{n-1}$ and
$l_n$, or $v$ belongs to one of the copies of $\mathbb{Z}^d$ that were
connected between levels $l_n-1$ and $l_n$. Further we require that
only the first and last vertices of $\gamma$ may have their $v$ in levels
$l_{n-1}$ and $l_n$. The interior vertices need to be in levels
strictly between, or in the copies of $\mathbb{Z}^d$.
With this
definition we have
\begin{lem}\label{lem:CdomU}Let $d$ be sufficiently large, $n>n_0$ and let $x$ be a tree element in level
  $l_{n-1}$. Let $Z$ be the set of 
  vertices $z$ in level $l_n$ such that $(z,0)$ is connected to
  $(x,0)$ by an open path between $l_{n-1}$ and $l_n$. Then $|Z|$
  stochastically dominates a variable $U$, independent of $n$, with $\mathbb{E} U>1$.
\end{lem}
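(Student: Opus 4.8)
The plan is to bound $|Z|$ from below by a thinned generation of a supercritical Galton--Watson process, show its expectation tends to infinity with $n$, and then convert that into an honest stochastic lower bound by a two-valued variable $U$. Write $p=p_c(\mathbb{Z}^{d+1})$; since $p_c(\mathbb{Z}^{d+1})\sim 1/2(d+1)$ we have $\mu:=(4d-1)p\to 2$ as $d\to\infty$, so for $d$ large $\mu>1$ with $\log\mu$ bounded below by, say, $3/5$. I would restrict the tree part of the exploration to the slice $G\times\{0\}$: on the $4d$-regular tree, $p$-percolation restricted to this slice is exactly a Galton--Watson process with offspring law $\mathrm{Bin}(4d-1,p)$, which is supercritical. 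Since there is no $l_m$ strictly between $l_{n-1}$ and $l_n$, the stretch of $G$ from level $l_{n-1}$ down to level $l_n-1$ is a genuine subtree with no inserted copies of $\mathbb{Z}^d$; letting $T_n:=(l_n-1)-l_{n-1}=\lceil d^2\log n\rceil-1$ and $\mathcal{W}$ the set of level-$(l_n-1)$ tree vertices in the slice-$0$ open cluster of $(x,0)$, the variable $|\mathcal{W}|$ is the $T_n$-th generation of that process, so $\mathbb{E}|\mathcal{W}|=\mu^{T_n}$. For each $w\in\mathcal{W}$ pick a designated child $z_w$ at level $l_n$ (distinct $w$ have disjoint sets of children), let $\mathbb{Z}^{d+1}_w$ be the copy of $\mathbb{Z}^d$ glued along the edge $(w,z_w)$ crossed with the $\mathbb{Z}$ factor, and let $B_w$ be the indicator that the pendant edges $(w,0)$--$((0,\dots,0),0)$ and $(z_w,0)$--$((n,\dots,n),0)$ are open and that $((0,\dots,0),0)$ is connected to $((n,\dots,n),0)$ inside $\mathbb{Z}^{d+1}_w$. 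These copies, pendant edges, and slice-$0$ tree edges are pairwise disjoint, so the $B_w$ are i.i.d.\ $\mathrm{Bernoulli}(q_n)$ and independent of $\mathcal{W}$, with $q_n=p^2\,\tau^{(d+1)}_{p_c}((n,\dots,n,0))$, $\tau^{(d+1)}_{p_c}$ the critical two-point function of $\mathbb{Z}^{d+1}$. Any configuration with $B_w=1$ yields an open path from $(x,0)$ to $(z_w,0)$ lying between levels $l_{n-1}$ and $l_n$, so $|Z|\ge S:=\sum_{w\in\mathcal{W}}B_w$.

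Next I would estimate $\mathbb{E}S=\mu^{T_n}q_n$. For this I invoke the high-dimensional asymptotics of the critical two-point function: for $d+1$ large one has $\tau^{(d+1)}_{p_c}(y)\ge c_d\,|y|_\infty^{-(d-1)}$ for all $y\neq 0$, hence $q_n\ge c'_d\,n^{-(d-1)}$; note the trivial single-path lower bound $p^{nd}$ is exponentially small and would be useless here. On the other hand $\mu^{T_n}\ge\mu^{-1}n^{d^2\log\mu}$, so the exponent $d^2\log\mu-(d-1)$ is positive for $d$ large and $\mathbb{E}S\to\infty$ as $n\to\infty$. In particular $\mathbb{E}S>1$ for all $n$ beyond some threshold, which is how I would fix $n_0=n_0(d)$.

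To upgrade $\mathbb{E}S\to\infty$ to stochastic domination I produce a two-valued $U$; one cannot expect concentration, since $\{S=0\}\supseteq\{\mathcal{W}=\emptyset\}$ has probability tending to the extinction probability of the Galton--Watson process. The offspring law $\mathrm{Bin}(4d-1,p)$ has variance $\le\mu$, so the standard second-moment identity for Galton--Watson gives $\mathbb{E}|\mathcal{W}|^2\le C\mu^{2T_n}$ with $C$ uniform in $n$ (and in $d$ over the relevant range), whence Paley--Zygmund yields $\mathbb{P}(|\mathcal{W}|\ge\tfrac12\mu^{T_n})\ge\rho_1>0$ uniformly in $n$. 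On that event $S\mid\mathcal{W}$ is binomial with mean $\ge\tfrac12\mu^{T_n}q_n=\tfrac12 a_n$ where $a_n:=\mathbb{E}S\to\infty$, so Chernoff's lower-tail bound gives $\mathbb{P}(S<K\mid\mathcal{W})\le e^{-a_n/16}$ for every fixed $K\le\tfrac14 a_n$. Combining, $\mathbb{P}(|Z|\ge K)\ge\mathbb{P}(S\ge K)\ge\rho_1/2$ once $n$ is large enough; choosing $K:=\lceil 2/\rho_1\rceil+1$ and enlarging $n_0(d)$ accordingly, I take $U=K$ with probability $\rho_1/2$ and $U=0$ otherwise. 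Then $\mathbb{E}U=K\rho_1/2>1$, $U$ is independent of $n$, and $\mathbb{P}(|Z|\ge j)\ge\rho_1/2\ge\mathbb{P}(U\ge j)$ for $1\le j\le K$ and trivially otherwise, i.e.\ $|Z|$ stochastically dominates $U$.

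I expect the main obstacle to be the quantitative interplay rather than any single step: the critical two-point function of $\mathbb{Z}^{d+1}$ decays only polynomially (rate $\sim n^{-(d-1)}$), and this rate must be beaten by the tree growth $\mu^{T_n}\sim n^{d^2\log\mu}$, which is what forces $d$ to be large; large $d$ is also precisely what makes $\mu=(4d-1)p_c(\mathbb{Z}^{d+1})>1$, via Kesten's $p_c(\mathbb{Z}^{d+1})\sim 1/2(d+1)$. A secondary subtlety is that the thinning probability $q_n\to 0$, so the Paley--Zygmund threshold for $|\mathcal{W}|$ must be allowed to grow like $\mu^{T_n}$; a fixed-size lower bound on $|\mathcal{W}|$ would not survive the binomial thinning.
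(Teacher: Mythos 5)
Your proposal is correct and follows essentially the same route as the paper: a supercritical branching process in the slice $G\times\{0\}$ across the tree stretch from $l_{n-1}$ to $l_n-1$ (controlled by a second-moment/Paley--Zygmund bound), followed by independent crossings of the disjoint copies of $\mathbb{Z}^{d+1}$ at rate given by Hara's critical two-point function, and finally domination of a two-valued $U$ with mean $>1$. Your exponent $n^{-(d-1)}$ for the crossing probability is the accurate one for $\mathbb{Z}^{d+1}$ (the paper writes $n^{2-d}$), and in either case it is swamped by the tree growth $n^{cd^2}$, exactly as in the paper.
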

\begin{proof}
Examine the set $Y$ of vertices $y$ of $G$
in level $l_{n}-1$ such that $(x,0)\in G\times \mathbb{Z}$ is
connected to $(y,0)$ inside the ``slice'' $G\times\{0\}$. This is just a
problem on supercritical branching processes (for $d$ sufficiently
large $p_c$ of the tree, which is $1/(4d-1)$, is smaller than
$p_c(\mathbb{Z}^{d+1})=(1+o(1))/2d$) and a standard second
moment argument gives that
\[
\mathbb{P}\left(|Y|>\tfrac 12 ((4d-1)p)^{d^2\log n}\right)>c
\]
where the term $d^2\log n$ is simply $l_{n}-1-l_{n-1}$, the height of the
tree we are examining. Here and below $c$ denotes positive constants which are
allowed to depend only on $d$. For $d$ sufficiently large we may
replace the term $(4d-1)p$ with $3/2$ and drop the the $\frac 12$
before it. We get
\[
\PP(|Y|>n^{cd^2})>c.
\]

Examine next the set $Z(y)$ of vertices $z$ in level $l_{n}$ such that
$(y,0)$ is connected to $(z,0)$ by an open path that starts by moving
from $(y,0)$ into an element $(0,\dotsc,0)$ in one of the copies of
$\ZZ^d\times\ZZ$ ``below'' it, then winds around in that copy and finally takes the last step from
$(n,\dotsc,n,0)$ to $(z,0)$ (this time we allow the path to use the extra dimension, i.e.\ it is not restricted to the slice $G\times\{0\}$). By \cite{H08}, the probability that $(0,\dotsc,0)\leftrightarrow (n,\dotsc,n,0)$ in $\mathbb{Z}^d\times \mathbb{Z}=\mathbb{Z}^{d+1}$ is $\ge c
n^{2-d}$, recall that we are examining
$p_c(\mathbb{Z}^{d+1})$. Further, all these events (for different $y$)
are independent, because they examine disjoint copies of
$\mathbb{Z}^{d+1}$. With the
argument of the previous paragraph we see that $(x,0)$ has probability
$>c$ to have $n^{cd^2}$ ``children'' in level $l_{n}-1$ and each one
has probability $>cn^{2-d}$ to have a child in level $l_{n}$, independently. In other
words, $Z=\cup_{y\in Y}Z(y)$ 
dominates a random variable which is with 
probability $1-c$ empty, and with probability $c$ a sum of
$n^{cd^2}$ independent Bernoulli variables with probability
$cn^{2-d}$. Hence, for $n$ sufficiently large, $Z$ dominates a
variable $U$ which is empty with probability $1-c/2$ and with
probability $c/2$ is $4/c$. The lemma is thus proved.
\end{proof}
The existence of an infinite cluster at $p$ now follows. Examine a
vertex $(x,0)$ with $x$ in level $n$ for $n>n_0$. Define inductively sets of
vertices $X_i$ with $X_0=\{(x,0)\}$ and $X_i$ being all vertices
$(y,0)$ with $y$ in level $l_{n+i}$ which are connected to some
vertex in $X_{i-1}$ by an open path between levels $l_{n+i-1}$ and
$l_{n+i}$. By lemma \ref{lem:CdomU}, the number of elements in $X_i$ which are
connected to a given element in $X_{i-1}$, stochastically dominates
the variable $U$. Further, all these connection events are independent
i.e.\ if $x$ and $x'$ are different elements in $\bigcup X_i$ then the
set of their decsendants are independent events, because the
connections use different edges. Hence the process $X_i$ dominates an
independent branching process with offspring distribution $U$. By
lemma \ref{lem:CdomU},
$\mathbb{E} U>1$ so by standard results, a branching process with
distribution $U$ survives with positive probability. Hence the process
$X_i$ also survives with positive probability. But if $X_i$ survives
to infinity then the cluster of $(x,0)$ is infinite. So the
probability that an infinite cluster exists is positive. As remarked
above, this is a 0-1 event, so in fact the probability is 1.

Below we will also
need that the probability has a uniform lower bound, so let us note it now: there
exists some constant $c>0$ such that
\begin{equation}\label{eq:unif}
\mathbb{P}((x,k)\mbox{ is in an infinite cluster}) > c\qquad \forall
x\in l_n\;\forall n>n_0\;\forall k.
\end{equation}

\subsection*{Non-uniqueness}To see that there are infinitely many clusters we apply the approach
of Benjamini and Schramm \cite[Theorem 4]{BS96} by comparing to a
branching random walk. We will not use usual branching random walk but a
slightly different process. Let us describe it.
\smallskip

1. If we have a particle in some vertex $(x,k)$ for $x$ in the tree, it
sends one particle to each neighbour of $(x,k)$ in $G\times\mathbb{Z}$ with probability
$p$. In particular, if $x$ is in level $l_n-1$ then a
particle is sent to each copy of $\mathbb{Z}^{d+1}$ ``below'' it, and if it is
in level $l_n$ then one particle is sent to the copy of
$\mathbb{Z}^{d+1}$ ``above'' it. 
\smallskip

2. Now assume we have a particle in
$(0,\dotsc,0,k)$ in some copy of
$\mathbb{Z}^{d+1}$ in the $n$\textsuperscript{th} level. It sends two
kinds of particles. First, one
particle with probability $p$ to its tree neighbour $(x,k)$ (which is
``above'' it in level $l_n-1$). Second,
it sends particles to all vertices $(y,l)$ with $y$ being in the same copy
of $\mathbb{Z}^{d}$ and equal to either $(0,\dotsc,0)$ or
$(n,\dotsc,n)$, with the distribution of descendants identical to that
of vertices connected to $(0,\dotsc,0,k)$ by independent percolation
at $p$ in $\mathbb{Z}^{d+1}$. 
\smallskip

3. A particle in $(n,\dotsc,n,k)$ does the
same, sending one particle with probability $p$ to its tree neighbour
$(x,k)$ ``below'' it, and extra particles to $(0,\dotsc,0,l)$ and
$(n,\dotsc,n,l)$ with the percolation distribution.
\smallskip

This ends the description of the process. Denote the step of particles
at time $t$ by $X_t$. The proof of
non-uniqueness now follows from the following two claims
\begin{lem}\label{lem:XdomC} For any $x\in G\times\mathbb{Z}$, the set of vertices
  visited by $X_t$ at some $t$ stochastically dominates
  $\mathcal{C}(x)\setminus Z$ where $Z$ is the union of all copies of
  $\mathbb{Z}^{d+1}$.
\end{lem}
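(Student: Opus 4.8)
The plan is to exhibit a coupling of the $p$-percolation configuration $\omega$ on $G\times\ZZ$ with the process $X_t$ started from a single particle at $x$ under which every vertex of $\mathcal{C}(x)$ lying outside $Z$ is visited by a particle of $X$; this almost-sure containment is exactly the claimed stochastic domination. I would take $x$ to be a tree vertex, the general case being analogous. Explore $\mathcal{C}(x)$ by the usual breadth-first edge-revealing procedure, growing $X$ in parallel, and maintain the invariant that \emph{every tree vertex discovered so far in $\mathcal{C}(x)$ carries at least one particle of $X$}. Along each tree edge, and along each edge joining a tree vertex to one of the special vertices $(0,\dots,0)$ or $(n,\dots,n)$ of a copy of $\ZZ^d$, the first branching rule emits a particle independently with probability $p$ --- precisely the law of the edge's status --- so couple ``edge open'' with ``particle emitted'', and the invariant then crosses all such edges automatically.

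The step that needs care is making the invariant cross a copy $W\cong\ZZ^{d+1}$ of $\ZZ^d\times\ZZ$. The only vertices of $W$ adjacent to the tree, hence the only possible entry/exit points of $\mathcal{C}(x)$ inside $W$, form the set $\partial W=\{(0,\dots,0,l),(n,\dots,n,l):l\in\ZZ\}$. When the exploration first enters $W$, at some $v\in\partial W$, reveal the cluster $\mathcal{C}_W(v)$ of $v$ inside $W$ and set $S=\mathcal{C}_W(v)\cap\partial W$. By the invariant the tree-neighbour of $v$ that pushed the exploration into $W$ already carries a particle; the first rule then emits one onto $v$ (coupled to that edge being open), and the second or third rule makes that particle spawn offspring on a random subset of $\partial W$ whose law is that of the boundary vertices connected to $v$ by an independent $\ZZ^{d+1}$-percolation, i.e.\ the law of $S$; couple these subsets to coincide. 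Rules two and three then emit a particle from each vertex of $S$ onto its tree-neighbour along every open edge, so the invariant has crossed $W$ through this cluster. If the exploration later re-enters $W$ at a vertex $v'\in\partial W$ in a not-yet-revealed part of $W$, then, given what has been revealed inside $W$, $\mathcal{C}_W(v')$ has the law of the cluster of $v'$ in $p$-percolation on $W$ with the previously revealed clusters deleted, and this is contained in the cluster of $v'$ in $p$-percolation on all of $\ZZ^{d+1}$, which is the law governing the fresh branching offspring at a new entry; so couple so that these offspring contain $\mathcal{C}_W(v')\cap\partial W$. Iterating over all entries into all copies met by $\mathcal{C}(x)$, every tree vertex of $\mathcal{C}(x)$ is eventually visited by $X$, and since $X$ only ever visits additional vertices, its trace contains $\mathcal{C}(x)\setminus Z$.

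The main obstacle will be this re-entry step: the percolation inside a fixed copy $W$ is a single configuration read off in disjoint pieces, whereas $X$ treats successive entries into $W$ as independent; reconciling the two should use only the monotonicity of connectivity under deletion of edges (equivalently vertices), by which a revealed piece --- living in a subgraph of $\ZZ^{d+1}$ --- is, as a random subset of $\partial W$, stochastically no larger than an independent cluster of the corresponding point in the whole $\ZZ^{d+1}$. The rest is bookkeeping: revisits by $X$ are harmless since we only want containment, and a tree vertex reached along a path that crosses several copies is still covered because the comparison ``branching trace $\supseteq$ percolation cluster'' composes across copies.
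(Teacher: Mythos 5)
Your argument is correct and is essentially the paper's own proof, spelled out in more detail: the paper likewise couples a breadth-first exploration of $\mathcal{C}(x)$ with the branching process, adapting the search so that on entering a copy of $\ZZ^{d+1}$ it reveals all exit points on the two connecting lines at once, and relies on exactly your monotonicity observation that the already-revealed ``past'' can only restrict the cluster while the branching particles use fresh independent percolation clusters.
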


(here and below $\mathcal{C}(x)$ denotes the cluster of $x$).

\begin{lem}\label{lem:transient} For $d$ and $n_0$ sufficiently
  large ($n_0$ depending on $d$), $X_t$ is transient i.e.\ the expected
  number of returns to the starting point is finite.
\end{lem}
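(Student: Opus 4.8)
The plan is to regard $X_t$ as a multi-type branching random walk, reduce transience to a spectral bound for its mean operator, and verify that bound with a Lyapunov function. Note first that the only vertices $X_t$ can ever occupy are the tree vertices $(x,k)$ and the two ``corner lines'' $\{(0,\dots,0,m):m\in\ZZ\}$, $\{(n,\dots,n,m):m\in\ZZ\}$ of each copy of $\ZZ^{d+1}$; we read the reproduction rule inside a copy as putting offspring on the cluster of the current vertex \emph{with that vertex deleted} (any other reading makes $X_t$ recurrent for a trivial reason). Write $\mu(u,w)$ for the expected number of particles sent from $u$ to $w$ in one step. Since particles reproduce conditionally independently, $\mathbb{E}\,N_t(x_0)=\mu^{(t)}(x_0,x_0)$, where $N_t(\cdot)$ is the occupation count and $\mu^{(t)}$ the $t$-fold composition; so the lemma is the statement $\sum_t\mu^{(t)}(x_0,x_0)<\infty$. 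It therefore suffices to find $h\colon V(G\times\ZZ)\to(0,\infty)$ and $\lambda<1$ with $(\mu h)(u)\le\lambda h(u)$ for every $u$: then $\mu^{(t)}h\le\lambda^t h$ pointwise, so $\mu^{(t)}(x_0,x_0)h(x_0)\le(\mu^{(t)}h)(x_0)\le\lambda^t h(x_0)$, giving $\sum_t\mu^{(t)}(x_0,x_0)\le(1-\lambda)^{-1}$ uniformly in the starting vertex.

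Now write $\mu$ out. A tree vertex sends $p$ along each of its at most $4d+2$ edges in $G\times\ZZ$ (to $(x,k\pm1)$ and to its $\le 4d$ tree-neighbours; for $x$ in level $l_n-1$ the down-neighbours are the vertices $(0,\dots,0,k)$ of the $4d-1$ copies below it, and for $x$ in level $l_n$ the up-neighbour is $(n,\dots,n,k)$ of the copy above it). A corner vertex $(0,\dots,0,k)$ of a level-$n$ copy sends $p$ to its tree neighbour and, for every $m$, an expected $\tau_{d+1}(0,\dots,0,m)$ particles to $(0,\dots,0,k+m)$ and $\tau_{d+1}(n,\dots,n,m)$ particles to $(n,\dots,n,k+m)$, where $\tau_{d+1}(v):=\PP_{p_c(\ZZ^{d+1})}[0\leftrightarrow v]$; and symmetrically for the corner $(n,\dots,n,k)$. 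The inputs needed are: $p=p_c(\ZZ^{d+1})=(1+o(1))/2d$; the spectral radius of simple random walk on the $4d$-regular tree, $\sqrt{4d-1}/2d$ (which is $\asymp 1/\sqrt d$); and the two ``masses'' $S:=\sum_{m\ne0}\tau_{d+1}(0,\dots,0,m)$ and $S'_n:=\sum_{m}\tau_{d+1}(n,\dots,n,m)$. Using the mean-field behaviour of the critical two-point function in high dimensions -- the same input (Hara's two-point function asymptotics / the triangle condition) that lies behind the lower bound $\PP[(0,\dots,0)\leftrightarrow(n,\dots,n,0)]\ge cn^{2-d}$ taken from \cite{H08} -- one has $S\to0$ as $d\to\infty$ (indeed $\tau_{d+1}\approx G_{\mathrm{SRW}(\ZZ^{d+1})}$, and the SRW Green function summed along one axis off the origin is $O(1/d)$), and $S'_n\to0$ as well, uniformly over $n\ge1$ and much faster, since the two corners lie at Euclidean distance $\ge n\sqrt d$.

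Now take $h(x,k)=\rho^{\,\ell(x)}$ for a tree vertex $x$ at tree-level $\ell(x)$, and $h\equiv\rho^{\,l_n}$ on the corner line $\{(0,\dots,0,m)\}$ and $h\equiv\rho^{\,l_n+1}$ on the corner line $\{(n,\dots,n,m)\}$ of a level-$n$ copy, with $\rho:=1/\sqrt{4d-1}$. A direct computation of $(\mu h)/h$ gives, on any tree vertex, at most $p\rho^{-1}+4dp\rho+2p$ (the $4d-1$ copies hanging below a level-$(l_n-1)$ vertex, and the copy above a level-$l_n$ vertex, enter here exactly as tree children would), which is $O(1/\sqrt d)$; on the corner line $\{(0,\dots,0,m)\}$, at most $p\rho^{-1}+S+\rho S'_n$; and on the corner line $\{(n,\dots,n,m)\}$, at most $p\rho^{-1}+S+\rho^{-1}S'_n$. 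Since $p\rho^{-1}\asymp1/\sqrt d$ and $S'_n\rho^{-1}\to0$, this gives $(\mu h)(u)\le\bigl(C/\sqrt d+S+o_d(1)\bigr)h(u)$ for every $u$, which is $\le\lambda h(u)$ with some $\lambda<1$ once $d$ is large (and then $n_0$ is fixed large enough that the bounds on $S'_n$ hold for every copy that actually appears). This is the required inequality.

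The step I expect to be the real obstacle is precisely the estimate $S\to0$ (or, all we really need, $\limsup_{d\to\infty}S<1$). This is where ``$d$ sufficiently large'' is genuinely used: if $S\ge1$ then a single corner line already supports a branching random walk on $\ZZ$ with mean offspring $\ge S$, which is recurrent, so the estimate cannot be avoided; and it requires real high-dimensional percolation input (the two-point function being asymptotic to the random-walk Green function, i.e.\ the lace expansion / triangle condition), since the crude self-avoiding-walk union bound for $\tau_{p_c}$ diverges and is useless at criticality. By contrast the ``cross mass'' $S'_n$ being negligible, and the tree computation -- which works only because the tree branching rate $(4d-1)p\approx2$ is far below the inverse spectral radius $\asymp\sqrt d$ -- are routine; one could also phrase the whole argument as an excursion decomposition, the excursions of $X_t$ into the $\ZZ^{d+1}$-copies being subcritical branching processes for $d$ large.
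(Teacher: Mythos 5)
Your proof is correct, and it rests on exactly the same two percolation inputs as the paper's, but it packages the transience argument differently. The paper first projects $X_t$ onto $G$ (collapsing each copy of $\ZZ^{d+1}$ to a stretched edge), dominates the projection by a branching random walk on the $4d$-regular tree in which each particle sends an expected $p+1/4d$ offspring to each neighbour and an expected $<\tfrac12$ offspring to itself, and then counts returning paths directly, bounding the expected occupation of the root at time $t$ by $\sum_{s\le t}(4d)^{s/2}d^{-s}2^{-(t-s)}$. You instead keep the multi-type structure and run a Lyapunov-function argument, verifying $\mu h\le\lambda h$ for $h=\rho^{\mathrm{level}}$ with $\rho=1/\sqrt{4d-1}$; this is essentially the same spectral mechanism (your $p\rho^{-1}+4dp\rho\asymp 1/\sqrt d$ is the paper's $(4d)^{s/2}d^{-s}$ in disguise), and it buys a clean geometric bound $\mu^{(t)}(x_0,x_0)\le\lambda^t$ uniform in the starting point, at the cost of having to check the inequality vertex by vertex on the corner lines. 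The two quantities you isolate are precisely what the paper needs: your $S$ is the subject of the paper's Lemma \ref{lem:Remco}, which proves $S\le C/\sqrt d$ via the lace-expansion Fourier asymptotics of \cite{HHS} (you assert $O(1/d)$; the paper only establishes $O(1/\sqrt d)$ and explicitly says it does not need more — either suffices since all that matters is $S<\tfrac12$, say), and your $S'_n$ is handled in the paper exactly as you suggest, from the two-point function bound $\PP(0\leftrightarrow x)\le C(d)|x|^{2-d}$ of \cite{HHS03}, with $n_0$ chosen large as a function of $d$. You are also right, and the paper's structure confirms, that the bound on $S$ is the one genuinely non-routine ingredient and that the convention that a particle does not beget a copy of itself at its own site ($m=0$ excluded) is essential; your reading of the reproduction rule matches the one the paper implicitly uses.
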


\begin{proof}[Proof of non-uniqueness given lemmas \ref{lem:XdomC} and
    \ref{lem:transient}]
Assume by contradiction that
there is only one infinite cluster. That would imply, for any $x$ and
$y$ in $G$.
\begin{multline*}
\mathbb{P}((x,0)\leftrightarrow (y,0))\ge 
\mathbb{P}(\{|\mathcal{C}((x,0))|=\infty\}\cap\{|\mathcal{C}((y,0))|=\infty\})\ge\\
\ge\mathbb{P}(|\mathcal{C}((x,0))|=\infty)\mathbb{P}(|\mathcal{C}((y,0))|=\infty)
\end{multline*}
where the second
inequality follows from FKG (see \cite[\S 2.2]{G99}). Assuming $x$
and $y$ belong to level $l_n$  (not necessarily the same $n$ for $x$
and $y$), (\ref{eq:unif}) would give
\[
\mathbb{P}((x,0)\leftrightarrow (y,0))\ge c.
\]
On the other hand, our process $X$ is transient (lemma
\ref{lem:transient}) and symmetric, i.e.\ the probability to reach $x$
from $y$ is the same as reaching $y$ from $x$. Any such process must
satisfy that, when we fix the starting point of $X$,
\begin{equation}
\lim_{y\to\infty}\PP\left(y\in\bigcup X_t \right)=0
\end{equation}
since otherwise you will have a sequence $y_n\to\infty$ such that you
can return to your the starting point with probability $>c$ after
visiting $y_n$. This clearly contradicts transience. 

Now apply the domination result. If we also assume $y$ is
not in the copies of $\ZZ^{d+1}$ then we get
\begin{equation}\label{eq:transience}
\lim_{y\to\infty}\mathbb{P}((x,0)\leftrightarrow (y,0))=0.
\end{equation}
We have reached a contradiction, demonstrating that one cannot
have a unique infinite cluster, and thus proving the theorem.
\end{proof}

\begin{proof}[Proof of lemma \ref{lem:XdomC}] This is completely
  standard: one simply explores the cluster using breadth-first search
  and note that the ``past'' of the algorithm only blocks you from
  exploring some vertices, while the branching process has no such
  restriction. One has to adapt the breadth-first search to our
  branching process i.e.\ when it enters a copy of $\ZZ^{d+1}$, search
  all neighbours in the two lines which connect outside in one step,
  but other than that there is no change necessary in the standard
  proof (see e.g.\ \cite{BS96}).
\end{proof}

\begin{proof}[Proof of lemma \ref{lem:transient}]We will show that
even the projection of $X_t$ on $G$ is transient. Since $X_t$ avoids
the copies of $\ZZ^d$ (except for the points directly connected to
the tree), let us consider the graph $H$ which is the tree of
degree $4d$, with every edge between level $l_n-1$ and level $l_n$
``stretched'' i.e.~replaced by a line with three edges and two
vertices. The projection of $X_t$ to $G$ is equivalent to a process on
$H$ that, from every particle, sends particles to all
neighbours with probability $p$, and sometimes sends additional particles
to itself and to one of its neighbours (above or below, depending on whether
you are in one of the stretched levels, and where exactly you are in
them). By lemma \ref{lem:Remco} below, the
expected number of the additional particles that remain in place is
$<\frac{1}{2}$, if only $d$ is chosen sufficiently large. As for
the additional particles sent to the neighbouring vertices, these can
be bounded directly from the two-point function \cite{HHS03}
i.e.\ from $\PP(0\leftrightarrow x)\le C(d)|x|^{2-d}$, and
if only $n_{0}$ is sufficiently large (as a function of $d$), the
expectation of these can be bounded by $1/4d$.

As a final simplification, embed
$H$ in a $4d$-regular tree by ``filling'' the two sparse rows between
$l_n-1$ and $l_n$. Namely, level $l_n-1$ of $H$ goes to a subset of level
$l_n-1+2(n-n_0)$ of the tree, the vertices on the stretched edges go
to levels $l_n+2(n-n_0)$ and $l_n+1+2(n-n_0)$ of the tree, level $l_n$
of $H$ goes to level $l_n+2+2(n-n_0)$ of the tree etc. The process on 
$H$ is now stochastically dominated by a process on the tree, which
sends from \emph{each} vertex of the tree to each of its neighbours a
particle with probability $p$ (like the ``usual'' vertices of $H$),
and also sends to itself and to \emph{all} of its neighbors additional particles
(like the vertices of $H$ on the stretched edges). In short, each
vertex sends both the particles it would have sent if it were a vertex
of the tree and the particles it would have sent if it were a vertex
of the stretched edges.

We have reached
now a very well-understood process: a branching random walk on a $4d$-regular
tree, where each particle sends to each of its neighbours an expected
$p+1/4d<1/d$ offspring, and an expected $<\frac{1}{2}$ offspring remain
in place. Showing that this process is transient can be done with
a straightforward calculation. Fix some $t$ and examine the number
of particles still at the origin at time $t$. Any such particles
must have done $s$ steps on the tree (for some $s\le t$) and stayed
in place $t-s$ steps. For a fixed $t$ and $s$ the expected number
of offspring is less than or equal to\[
\#\{\mbox{paths in the tree of length \ensuremath{s} returning to $x$}\}\cdot\frac{1}{d^{s}}\cdot\frac{1}{2^{t-s}}=(4d)^{s/2}\cdot\frac{1}{d^{s}}\cdot\frac{1}{2^{t-s}}\]
and summing over all $t$ and $s$ shows that the process is transient,
if only $d$ is sufficiently large (this last step requires $d>4$, but
we also rely on $p_c(\ZZ^d)$ being sufficiently close to $1/2d$ and on
lemma \ref{lem:Remco}, both which requires larger $d$).
\end{proof}
\begin{lem}\label{lem:Remco}
Let $d$ be sufficiently large. Then critical percolation on $\mathbb{Z}^{d}$
satisfies\[
\sum_{n\ne0}\mathbb{P}(\vec{0}\leftrightarrow(0,\dotsc,0,n))<\frac{C}{\sqrt{d}}\]
(here $\vec{0}=(0,\dotsc,0)\in\mathbb{Z}^{d}$ and $C$ is a constant
independent of the dimension)\end{lem}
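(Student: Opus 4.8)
The plan is to use the sharp two-point function estimate for critical percolation in high dimensions. By the Hara–Slade lace expansion results (as cited via \cite{HHS03,HHS03b} in the paper), for $d$ sufficiently large there is a constant $C$, independent of the dimension, such that
\[
\mathbb{P}(\vec 0\leftrightarrow x)\le \frac{C}{|x|^{d-2}}\qquad\forall x\ne\vec 0,
\]
where $|x|$ is Euclidean norm. I would start from this and simply sum over the axis direction: writing $x=(0,\dotsc,0,n)$ we have $|x|=|n|$, so
\[
\sum_{n\ne 0}\mathbb{P}(\vec 0\leftrightarrow(0,\dotsc,0,n))\le 2C\sum_{n\ge 1}\frac{1}{n^{d-2}}.
\]
For $d\ge 4$ this tail sum is finite, but to get the $1/\sqrt d$ decay one must be more careful: the sum $\sum_{n\ge1}n^{-(d-2)}$ is only of order $1$, not $1/\sqrt d$. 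So the naive bound is not enough, and the real content is to extract a factor $1/\sqrt d$.

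The extra gain has to come from the large-$n$ regime interacting with the correct behaviour of the two-point function at \emph{small} $n$. The point is that for $n$ of order $1$ (in particular $n=\pm1$), $\mathbb{P}(\vec 0\leftrightarrow(0,\dotsc,0,n))$ is not order $1$ but order $1/d$, because critical percolation on $\mathbb{Z}^d$ behaves like critical percolation on the tree $\mathbb{T}_{2d}$ up to corrections, and there $\mathbb{P}(\vec 0\leftrightarrow e_1)=p_c=\frac{1+o(1)}{2d}$. More precisely I would invoke the lace-expansion fact that the critical two-point function is, uniformly, $\mathbb{P}(\vec 0\leftrightarrow x)=\delta_{0,x}+(2d)^{-1}\big(\,\text{(random walk Green's-type quantity)}+O(d^{-1})\big)$; quantitatively, there is a constant $C$ with $\mathbb{P}(\vec 0\leftrightarrow x)\le \frac{C}{d}\,G_d(x)$ for $x\ne 0$, where $G_d$ is the Green's function of simple random walk on $\mathbb{Z}^d$ (or of the walk that takes $d^{-1/2}$-size steps — the precise normalisation is what produces $\sqrt d$). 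Then
\[
\sum_{n\ne0}\mathbb{P}(\vec0\leftrightarrow(0,\dotsc,0,n))\le\frac{C}{d}\sum_{n\ne0}G_d(0,\dotsc,0,n),
\]
and the remaining task is the purely random-walk estimate $\sum_{n\ne0}G_d(0,\dotsc,0,n)\le C\sqrt d$. This last inequality I would get from the local CLT / Fourier representation: $\sum_{n\in\mathbb Z}G_d(0,\dots,0,n)=\frac{1}{(2\pi)^{d-1}}\int_{[-\pi,\pi]^{d-1}}\frac{d\theta}{1-\widehat\mu(\theta,0)}$ where one has already summed the geometric series in the last coordinate; this integral is of order $\sqrt d$ because near the origin $1-\widehat\mu(\theta,0)\asymp \frac{1}{d}|\theta|^2+\frac1d$, and integrating $d/( |\theta|^2+1)$ over a $(d-1)$-dimensional box gives $\Theta(\sqrt d)$ (each "unit" of the denominator's constant term buys one factor of $\sqrt d$ against the volume growth; this is the standard computation behind $p_c(\mathbb Z^d)=\frac{1}{2d}+\frac{1}{(2d)^2}+\cdots$ and behind the bubble diagram being $1+O(1/d)$).

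The main obstacle, as I see it, is bookkeeping the dimension-dependence: one must cite a version of the Hara–Slade estimate in which the constant in front of $|x|^{2-d}$ is genuinely dimension-independent \emph{and} carries the correct $1/d$ prefactor at short distances, rather than just the qualitative $\Theta(|x|^{2-d})$ bound, and then pass cleanly to the random-walk Green's function whose axis-sum is $\Theta(\sqrt d)$. An alternative, perhaps cleaner, route avoids the two-point function entirely: bound $\sum_{n\ne0}\mathbb{P}(\vec0\leftrightarrow(0,\dots,0,n))$ by the expected number of times the cluster of $\vec0$ meets the axis, which by the tree-graph (BK) inequality is at most $\sum_{n}\sum_{x}\mathbb P(\vec0\leftrightarrow x)\mathbb P(x\leftrightarrow(0,\dots,0,n))$ — essentially a "slab" version of the bubble diagram — and then estimate this Fourier-analytically, again getting $\sqrt d$ from the one free summation direction. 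Either way the $\sqrt d$ is the signal that only one of the $d$ directions is being summed, against a bubble-type quantity that is $O(1)$; I would choose whichever of the two cited papers states the sharpest ready-made input.
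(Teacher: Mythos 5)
Your high-level plan (pass to Fourier space, sum out the axis coordinate, compare with a random-walk Green's function) is in the same spirit as the paper's proof, but both quantitative inputs you propose are wrong, and the mechanism you identify as the source of the $\sqrt d$ is not the right one. First, the claimed pointwise bound $\mathbb{P}(\vec 0\leftrightarrow x)\le \frac{C}{d}G_d(x)$ is false: already at $x=e_1$ the left-hand side is at least $p_c=(1+o(1))/2d$, while $G_d(e_1)$ is itself $\approx 1/2d$, so the correct comparison to the simple random walk Green's function carries \emph{no} extra factor $1/d$. Second, the axis sum of the Green's function is not of order $\sqrt d$: since $\widehat D(k)=\frac 1d\sum_{i=1}^d\cos k_i$, setting the last momentum coordinate to zero gives the exact identity $1-\widehat D_d(\theta,0)=\frac{d-1}{d}\bigl(1-\widehat D_{d-1}(\theta)\bigr)$; the $\cos 0=1$ term cancels the would-be mass term, so your claim $1-\widehat\mu(\theta,0)\asymp \frac 1d|\theta|^2+\frac 1d$ is incorrect. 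Consequently $\sum_{n\in\mathbb{Z}}G_d(0,\dotsc,0,n)=\frac{d}{d-1}G_{d-1}(0)=1+O(1/d)$, and after removing the $n=0$ term (which is also $1+O(1/d)$) the axis sum is $O(1/d)$, not $\Theta(\sqrt d)$. Your two errors happen to compensate numerically ($\frac 1d\cdot\sqrt d=\frac 1{\sqrt d}$), but neither step as stated is provable, and the heuristic that ``one free summation direction buys a $\sqrt d$'' is not what is going on; your alternative ``slab bubble'' route rests on the same heuristic.

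For comparison, the paper's proof uses the Heydenreich--van der Hofstad--Sakai infrared bound $\widehat{G_{z_c}}(k)=\frac{1+O(\beta)}{1-\widehat D(k)}$ with $\beta=K/d$, whose constants are uniform in the dimension (this uniformity is precisely the citation issue your proposal leaves open), sums out the last coordinate to get a $(d-1)$-dimensional integral, and then \emph{subtracts the $n=0$ term} $\mathbb{P}(\vec 0\leftrightarrow\vec 0)=1$ --- an essential step, since the full axis sum is $1+O(1/d)$ and the entire content of the lemma lies in the remainder. What is left is essentially $\frac{1}{(2\pi)^{d-1}}\int \widehat D_{d-1}/(1-\widehat D_{d-1})$ plus an $O(1/d)$ error, and this is bounded by Cauchy--Schwarz by $\bigl(\int\widehat D^2\bigr)^{1/2}\bigl(\int(1-\widehat D)^{-2}\bigr)^{1/2}$, where the first factor is the two-step return probability $\frac{1}{2(d-1)}$ and the second is bounded uniformly in $d$ by the HHS estimates. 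Thus the $1/\sqrt d$ is an artifact of Cauchy--Schwarz applied to a quantity whose true size is $O(1/d)$ (the paper itself remarks the correct order should be $C/d$). Your skeleton could be repaired by replacing your two claims with the correct ones --- $\mathbb{P}(\vec 0\leftrightarrow x)\le C\,G_d(x)$ with a $d$-uniform constant (the hard part to cite), and $\sum_{n\ne 0}G_d(0,\dotsc,0,n)=O(1/d)$ --- but as written the key quantitative steps fail.
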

We assume that the correct asymptotic behaviour is $C/d$, but we do
not need it in this paper. It is well known that in $d=2$ this sum is
$\infty$, for example, it follows from the estimate
$\mathbb{P}(0\leftrightarrow\partial B(n))\ge cn^{-1/3}$, see
\cite[equation (5.1)]{K87}. We will not give more details on this
fact, as it will take us too far off course.
\begin{proof}
We follow Heydenreich, van der Hofstad and Sakai \cite{HHS}. Let
us recall formula (1.41) ibid., in their notation:
\begin{equation}
\widehat{G_{z_{c}}}(k)=\frac{1+O(\beta)}{1-\widehat{D}(k)}.\label{eq:HHS}
\end{equation}
Let us explain the notation. The $z_{c}$ is $2d\cdot p_{c}$ \cite[\S 1.2.3]{HHS} and $G$
is the connection probability \[
G_{z}(x)=\mathbb{P}(0\leftrightarrow x)\]
where the probability is with respect to percolation at $z/2d$ \cite[\S 1.2.3 and equation (1.19)]{HHS}
so $G_{z_{c}}$ is the critical connection probability. $\beta=K/d$
where $K$ is some absolute constant \cite[first line of \S 1.3]{HHS}
and the constant implicit in the $O(\cdot)$ is also dimension independent.
$D(x)=\frac{1}{2d}\mathbf{1}_{\{|x|=1\}}$ \cite[(1.1)]{HHS}. Finally
$\,\widehat{\cdot}\,$ is the usual Fourier transform and $k\in[-\pi,\pi)^{d}$.
In particular\[
\widehat{D}(k)=\frac{1}{d}\sum_{i=1}^{d}\cos(k_{i}).\]

With (\ref{eq:HHS}) explained, let us calculate first the sum including
the term $\mathbb{P}(\vec{0}\leftrightarrow\vec{0})$,\begin{align*}
\sum_{n}\mathbb{P}(\vec{0}\leftrightarrow(0,\dotsc,0,n)) & =\frac{1}{(2\pi)^{d-1}}\int\widehat{G_{z_{c}}}(k_{1},\dotsc,k_{d-1},0)\, dk_{1}\dotsb dk_{d-1}\\
 & \le\frac{1}{(2\pi)^{d-1}}\int\frac{1+C/d}{\frac{d-1}{d}-\frac{1}{d}\sum_{i=1}^{d-1}\cos(k_{i})}\, dk_{1}\dotsb dk_{d-1}\\
 & =\frac{d+C}{d-1}\cdot\frac{1}{(2\pi)^{d-1}}\int\frac{dk}{1-\widehat{D_{d-1}}(k)}.\end{align*}
Removing the term $\mathbb{P}(\vec{0}\leftrightarrow\vec{0})=1=\frac{1}{(2\pi)^{d-1}}\int1$
gives\begin{align}
\sum_{n\ne0} & \le\frac{d+C}{d-1}\cdot\frac{1}{(2\pi)^{d-1}}\int\frac{1}{1-\widehat{D_{d-1}}(k)}-\frac{d-1}{d+C}\, dk\nonumber \\
 & =\frac{d+C}{d-1}\cdot\frac{1}{(2\pi)^{d-1}}\int\frac{\widehat{D_{d-1}}(k)}{1-\widehat{D_{d-1}(k)}}\, dk+\frac{C+1}{d-1}.\label{eq:sumno0}\end{align}
To estimate the integral, apply Cauchy-Schwarz and get\[
\int\frac{\widehat{D}}{1-\widehat{D}}\le\left(\int\widehat{D}^{2}\right)^{1/2}\left(\int\frac{1}{(1-\widehat{D})^{2}}\right)^{1/2}.\]
The first integral (with the $(2\pi)^{1-d}$ which we have omitted from
the formula above) is simply the probability that simple random walk
returns to zero after two steps, and hence it is simply $1/2(d-1)$.
The second integral is shown in \cite[(3.4)-(3.6)]{HHS} to be bounded
independently of the dimension (\cite{HHS} have an additional parameter
in the calculation, $s$, which in our case is $1$). All in all we
get\[
\frac{1}{(2\pi)^{d-1}}\int\frac{\widehat{D_{d-1}}(k)}{1-\widehat{D_{d-1}}(k)}\,
dk\le
\left(\frac{1}{2(d-1)}\right)^{1/2}C^{1/2}
\]
which we plug into (\ref{eq:sumno0}) and get\[
\sum_{n\ne0}\mathbb{P}(\vec{0}\leftrightarrow(0\dotsc,0,n))\le\frac{d+C}{d-1}\cdot\frac{C}{\sqrt{2(d-1)}}+\frac{C+1}{d-1}\le\frac{C}{\sqrt{d}}\]
as required.\end{proof}

\section*{Proof of theorem \ref{thm:cutsets}}

Fix some vertex $v$ and examine the number $\Nn$ of infinite
components that intersect $\{v\}\times\ZZ$. $\Nn$ is invariant to
the translations of $\ZZ$ so it is constant almost surely. Further,
the standard modification argument \cite{BK89} shows that $\Nn$ cannot take any
finite value $>1$. The main step is to preclude the possibility that
$\Nn=\infty$. An infinite cluster which intersects $\{v\}\times\ZZ$
could intersect it either at finitely many vertices or in infinitely many
vertices. We start with the first case.

\begin{lem}\label{lem:nofinite}For every vertex $v$ of $G$, the probability that there
  exists an infinite cluster intersecting $\{v\}\times\ZZ$ at finitely
  many vertices only is 0.
\end{lem}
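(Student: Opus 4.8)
The plan is to show that the event in question has probability $0$ by an argument based on the structure of $G\times\ZZ$ together with the cutset hypothesis on $G$. Suppose, for contradiction, that with positive probability there is an infinite cluster $\Cl$ meeting $\{v\}\times\ZZ$ at only finitely many vertices. Then in particular, with positive probability there is an infinite cluster $\Cl$ and an integer $m$ such that $\Cl\cap(\{v\}\times\ZZ)\subseteq\{v\}\times[-m,m]$; by countable additivity and translation invariance we may assume $m$ is fixed and even that $(v,0)\in\Cl$. The key observation is that such a cluster is ``pinched'' near the $\ZZ$-fibre over $v$: any path in $\Cl$ from $(v,0)$ to infinity that wants to move far along the $\ZZ$-direction must first leave the fibre $\{v\}\times\ZZ$, i.e. it must pass through the $G$-neighbours of $v$.

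**Reduction to a finite cutset in $G$.** Let $A$ be a finite set of vertices of $G$ containing $v$; by the hypothesis of Theorem~\ref{thm:cutsets} there are $K$ edges $e_1(A),\dots,e_K(A)$ of $G$ whose removal makes the $G$-component of every vertex of $A$ finite. I would choose $A$ large enough that the infinite cluster $\Cl$, restricted to the region it occupies, is forced to ``escape'' through the corresponding edges: more precisely, consider the edge set $E_A=\{e_i(A)\}\times\ZZ$ in $G\times\ZZ$, together with the (finitely many, once $m$ is fixed) fibre edges inside $\{v\}\times[-m,m]$. Removing $E_A$ from $G\times\ZZ$ disconnects $(G\times\ZZ)$ into pieces, one of which, call it $W$, contains $(v,0)$ and consists of (component of $v$ in $G\setminus\{e_i(A)\}$)$\,\times\ZZ$, which is a set of the form $F\times\ZZ$ with $F$ finite. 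So $\Cl\cap W\subseteq F\times\ZZ$, and the only way for $\Cl$ to be infinite while meeting $\{v\}\times\ZZ$ finitely is for $\Cl$ to use one of the finitely many edges of $E_A$ lying in a bounded slab — but $\Cl$ is infinite in the $\ZZ$-direction inside $F\times\ZZ$ (since $F$ is finite, an infinite subset of $F\times\ZZ$ is unbounded in $\ZZ$), forcing it to exit through $\{e_i(A)\}\times\ZZ$ at infinitely many heights, hence to meet $\{v\}\times\ZZ$ — no: rather, since each $e_i(A)$ is an edge of $G$ not incident to an edge of the fibre over $v$ unless $e_i(A)$ is incident to $v$, the point is that $\Cl\cap(F\times\ZZ)$ being infinite and connected to infinity only through the $e_i$-fibres means $\Cl$ contains infinitely many of the vertices $(w,k)$ with $w$ an endpoint of some $e_i(A)$, and by connectivity within $F\times\ZZ$ each such vertex is joined inside $F\times\ZZ$ to the fibre over $v$, so $\Cl\cap(\{v\}\times\ZZ)$ is infinite — contradiction.

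**Carrying it out carefully.** The heart of the matter is the deterministic graph-theoretic claim: if $\Cl$ is an infinite connected subgraph of $G\times\ZZ$ meeting $\{v\}\times\ZZ$ in finitely many vertices, then after removing the edge set $E_A$ above, the part of $\Cl$ inside the component $F\times\ZZ$ of $(v,0)$ is finite. This holds because $F\times\ZZ$ with those edges removed is literally a disjoint union over $k\in\ZZ$ of the finite graphs $F\times\{k\}$ (all vertical edges within $F\times\ZZ$ survive, but they connect $(w,k)$ to $(w,k+1)$, so actually $F\times\ZZ$ stays connected along fibres) — so I must instead remove, in addition to $E_A$, the vertical edges over $v$ outside $[-m,m]$; since $\Cl$ avoids $\{v\}\times(\ZZ\setminus[-m,m])$, this removal doesn't disconnect $\Cl$, yet it does (together with $E_A$) cut the fibre bundle $F\times\ZZ$ into a bounded slab plus pieces not containing $(v,0)$'s slab. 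The main obstacle is getting this topological bookkeeping exactly right — specifically ensuring that one genuinely isolates a \emph{finite} region containing $(v,0)\cap\Cl$, which needs the observation that $\Cl$'s only accesses to infinity from that region are through the finitely many $G$-cut-edges, each lifted to a line, and that following any such line back leads into $\{v\}\times\ZZ$, contradicting finiteness there. Once the deterministic statement is in hand, the probabilistic conclusion is immediate: the event ``$\exists$ infinite $\Cl$ meeting $\{v\}\times\ZZ$ finitely'' is contained in a countable union (over $m$ and over the finite choices of cut-edge configurations) of events each of which is empty, hence has probability $0$.
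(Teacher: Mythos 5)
There is a genuine gap, and it is fatal to the whole strategy: you try to show the event is \emph{deterministically} empty (``a countable union \dots of events each of which is empty''), but it is not. The percolation configuration in which the only open edges are the horizontal edge from $(v,0)$ to $(w,0)$, for some neighbour $w$ of $v$, together with all vertical edges of $\{w\}\times\ZZ$, produces an infinite cluster meeting $\{v\}\times\ZZ$ in exactly one vertex. So no bookkeeping with cutsets can make the event empty; the lemma is a probability-zero statement, not an impossibility statement, and any proof must use the randomness (translation invariance along $\ZZ$, insertion/deletion of finitely many edges, ergodicity). Concretely, the two deterministic claims you lean on fail: (1) after removing $\{e_i(A)\}\times\ZZ$ and the vertical edges over $v$ outside $[-m,m]$, the component $F\times\ZZ$ of $(v,0)$ is \emph{not} cut into a bounded piece, since every fibre $\{w\}\times\ZZ$ with $w\in F\setminus\{v\}$ stays vertically connected and the cluster may be infinite there without touching $\{v\}\times\ZZ$ again (the hypothesis only lets you cut a finite set of $G$ off by $K$ edges; it does not let you isolate $v$ alone, so $F\neq\{v\}$ in general); (2) ``following any such line back leads into $\{v\}\times\ZZ$'' is false --- the cluster may cross a lifted cut edge once, at any height, and escape to infinity outside $F\times\ZZ$ without ever returning to the fibre over $v$.

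For comparison, the paper's proof is genuinely probabilistic. It takes nested sets $Q_1\subset Q_2\subset\dotsb$ containing $v$ with $|\partial Q_i|\le K$ (this is where the cutset hypothesis enters), observes that the events $E_i=\{(v,0)\leftrightarrow \parv Q_i\times\ZZ\}\setminus\{(v,0)\leftrightarrow \parv Q_{i+1}\times\ZZ\}$ are disjoint so $\PP(E_i)\to0$, and uses a finite-energy bound $\PP(E_i)\ge(1-p)^{LK}\PP(F_i)$ to conclude that, conditionally on the cluster of $(v,0)$ being infinite with bounded trace on $\{v\}\times\ZZ$, the cluster must hit $\parv Q_i\times\ZZ$ at more than $L$ heights. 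Choosing $L$ large in terms of the assumed probability $q$ and invoking ergodicity of the $2r\ZZ$-translations, the disjoint translated clusters would need more than $K\cdot(\text{length of the interval})$ points inside $\parv Q_i\times[\text{interval}]$, a pigeonhole contradiction. If you want to salvage your approach, you would have to replace the ``empty event'' conclusion by this kind of density-versus-capacity counting; the cutsets serve only to make the capacity of $\parv Q_i\times\ZZ$ per unit height bounded by $K$, not to trap the cluster.
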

\begin{proof}
The idea is simple: if such clusters exist then they have some
positive density. However, as you trace the cluster from $v$ further and
further in $G$, its boundary must increase, eventually increasing
beyond $K/(\textrm{the density})$, leading to a contradiction to the
disconnection property of $G$.

Let us make this more formal. 
Using the disconnection property of $G$ repeatedly, one may find a sequence of
$v\in Q_1\subset Q_2\subset\dotsb$ with $|\partial Q_i|\le K$ and such
that every vertex $w$ which belong to some edge in $\partial Q_i$
belongs to $Q_{i+1}$. Denote by $\parv$ the internal vertex boundary
i.e.\ $\parv X$ is the set of all vertices in $X$ with neighbours outside
$X$. Then the events
\[
E_i:=\{(v,0)\leftrightarrow \parv Q_i\times\ZZ\}
\setminus\{(v,0)\leftrightarrow \parv Q_{i+1}\times\ZZ\}
\]
are disjoint, and hence $\sum\PP(E_i)\le 1$ and in particular $\PP(E_i)\to 0$. Fix now some $L$ and let
$F_i$ be the event that 
\[
0<|\{n\in \ZZ : \exists x\in\parv Q_i \textrm{ s.t. }
(v,0)\leftrightarrow (x,n) \textrm{ in }Q_i\times\ZZ\}|\le L
\]
(as usual, ``$a\leftrightarrow b$ in $X$'' means that there exists an
open path from $a$ to $b$ using only vertices in $X$). Then
if $F_i$ happened then there are at most $KL$ edges through which the
cluster may continue, and if they are all closed this would imply
$E_i$. Hence $\PP(E_i)\ge (1-p)^{LK}\PP(F_i)$. Hence
$\PP(F_i)\to 0$.

Assume now that the probability that the cluster of $(v,0)$  is
infinite, but its intersection with $\{v\}\times\ZZ$ is finite, is
positive. Let $r$ be some number such that 
\[
q:=\PP\Big(|\Cl(v,0)|=\infty, (\Cl(v,0)\cap\{v\}\times\ZZ)\subset
\{v\}\times[-r,r)\Big)>0.
\]
Fix $L=8Kr/q$ (recall that $K$ is the constant in the disconnection
property of $G$) and with this $L$ define the event $F_i$ above. Since
$\PP(F_i)\to 0$, let $i$ be sufficiently large such that
$\PP(F_i)<\frac14 q$. Subtracting we get
\[
\PP\left(\begin{array}{l}
|\Cl(v,0)|=\infty,\\
(\Cl(v,0)\cap\{v\}\times\ZZ)\subset\{v\}\times[-r,r),\\
|\Cl(v,0)\cap\parv Q_i\times\ZZ|> L
\end{array}\right)>\tfrac34 q
\]
(we used here that if $\Cl(0)$ is infinite then it cannot be contained
in $Q_i\times\ZZ$, except with probability 0, because
$p_c(\textrm{finite graph}\times\ZZ)=1$). Finally, strengthen the last requirement to $|\Cl(0)\cap \parv
Q_i\times[-N,N]|> L$ for some $N$ so large so that it only decreases
the probability by $\frac14 q$. We get
\[
\PP\left(\begin{array}{l}
|\Cl(v,0)|=\infty,\\
(\Cl(v,0)\cap\{v\}\times\ZZ)\subset\{v\}\times[-r,r),\\
|\Cl(v,0)\cap\parv Q_i\times[-N,N]|> L
\end{array}\right)>\tfrac12 q.
\]
Denote this event by $B$ and its translation by $n$ by $B_n$. We
finish by ergodicity of the translations by $2r\ZZ$. Indeed, we know
that 
\[
|\{n\in{1,\dotsc,a}:B_{2rn}\textrm{ occurred}\}|>\tfrac12 aq
\]
for $a$ sufficiently large (random). But this means that in $\parv
Q_i\times[2r-N, 2ar+N]$ there are $>\frac12 aq \cdot L=4Kar$ distinct points,
since each cluster has $> L$ points, and the clusters are disjoint
since their intersections with $\{v\}\times\ZZ$ belong to disjoint
intervals. But there is no room for $4Kar$ points, only to
$K(2ar-2r+1+2N)= 2Kar+o(a)$,
for $a$ sufficiently large, leading to a contradiction and
establishing the lemma.
\end{proof}
\begin{lem}It is not possible for infinitely-many clusters to
  intersect $\{v\}\times\ZZ$ at infinitely many vertices each.
\end{lem}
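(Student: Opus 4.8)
The plan is to upgrade Lemma~\ref{lem:nofinite} to a quantitative statement and then finish by a soft disjointness argument. Concretely, I would show that there is a constant $c_{0}=c_{0}(p,K)>0$ such that, almost surely, \emph{every} infinite cluster $\Cl$ meeting $\{v\}\times\ZZ$ satisfies
\[
\underline{d}(\Cl):=\liminf_{M\to\infty}\frac{1}{2M+1}\,\bigl|\Cl\cap(\{v\}\times[-M,M])\bigr|\ \ge\ c_{0}.
\]
Granting this, the lemma is immediate: distinct infinite clusters meet $\{v\}\times\ZZ$ in pairwise disjoint sets, the liminf density is superadditive over disjoint sets, and it is at most $1$; hence at most $1/c_{0}$ infinite clusters meet $\{v\}\times\ZZ$, and these are the clusters counted by $\Nn$, so $\Nn\le 1/c_{0}<\infty$. (The same estimate with $[0,M]$ in place of $[-M,M]$ handles clusters whose heights in $\{v\}\times\ZZ$ are bounded on one side.)

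For the quantitative bound I would follow the template of Lemma~\ref{lem:nofinite}. The event ``there is an infinite cluster meeting $\{v\}\times\ZZ$ with $\underline d<c_{0}$'' is invariant under the $\ZZ$-translations, hence has probability $0$ or $1$; if it has probability $0$ the density bound holds a.s.\ and we are done, so assume probability $1$. Then, arguing as in the preliminary reductions, $\PP\bigl(|\Cl(v,0)|=\infty,\ \underline d(\Cl(v,0))<c_{0}\bigr)>0$, so by the ergodic theorem a positive density of heights $n$ have $(v,n)$ in such a ``thin'' infinite cluster, each of which fails the density estimate in arbitrarily long windows around heights it contains. Now fix, once and for all, a finite $Q\ni v$ with $|\partial Q|\le K$. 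Since $Q$ is finite, $Q\times\ZZ$ is quasi-one-dimensional and subcritical at $p$, so a.s.\ it produces only finite clusters (``$Q$-pieces''), each meeting the $\le K$ columns $\parv Q\times\ZZ$ at finitely many heights. Decomposing an infinite cluster into its (infinitely many, since it is infinite) $Q$-pieces glued through the gateway edges of $\partial Q\times\ZZ$ shows that it meets $\parv Q\times\ZZ$ at infinitely many heights, while at most $K$ clusters can meet any fixed height. Combining the thinness of a cluster in a window with the ergodic theorem then forces, inside a comparable window of length $\ell$, a number of incidences (cluster, boundary vertex of $\parv Q$) that is a definite fraction of $\ell$; since such a window contains only $\le K\ell$ vertices of $\parv Q\times\ZZ$, choosing $c_{0}$ small enough contradicts this room bound, exactly as in the proof of Lemma~\ref{lem:nofinite}.

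The main obstacle is this last counting step, and two points require care. The first is the order of quantifiers: one must use a \emph{fixed} cut $Q$ (not $Q_{i}$ with $i\to\infty$ as in Lemma~\ref{lem:nofinite}), together with the fact that every infinite cluster meets $\parv Q\times\ZZ$ at infinitely many heights, so that the exponential tail of the $Q$-piece sizes is a fixed quantity and does not circularly depend on the window parameters. The second is the passage from ``number of thin heights $n$'' to ``number of boundary incidences'': a single infinite cluster can contain many of the relevant $(v,n)$, so one either restricts to $Q$-pieces of bounded $\ZZ$-diameter (paying a small error controlled by the exponential tail) or, more robustly, observes that a cluster carrying a positive fraction of the thin heights in a long window automatically has density $\gtrsim c_{0}$ there and so is not thin once $c_{0}$ is small — hence cannot be one of the offending clusters. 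Modulo these bookkeeping points and the one-sided case noted above, the argument is a repetition of the counting in Lemma~\ref{lem:nofinite}.
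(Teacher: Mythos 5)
Your key claim --- that almost surely \emph{every} infinite cluster meeting $\{v\}\times\ZZ$ has lower density at least $c_{0}(p,K)$ in the column --- is false with a constant depending only on $p$ and $K$. Take $G$ to be a ray $v=u_{0},u_{1},u_{2},\dots$ with a large finite clique glued at each $u_{j}$ for $j\ge L$; any finite set is cut from infinity by removing a single ray edge, so $K=1$, and for a fixed small $p$ (supercritical for clique$\times\ZZ$ but $p<1/2$) the graph $G\times\ZZ$ percolates. For $(v,0)$ to lie in an infinite cluster its cluster must cross the subcritical strip $\{u_{0},\dots,u_{L}\}\times\ZZ\subset\ZZ^{2}$, so $\PP((v,0)\in\text{infinite cluster})\le Ce^{-cL}$; by ergodicity the (unique) infinite cluster meets the column with density $\le Ce^{-cL}$, which beats any prescribed $c_{0}(p,K)$ once $L$ is large. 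So at best you could hope for a bound $c_{0}(G,v,p)>0$ uniform over clusters --- but in a hypothetical scenario with infinitely many clusters that uniform statement is essentially the theorem itself, and it is exactly the part your sketch does not prove.

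The counting step is where the proposal breaks down, and it cannot be ``a repetition of Lemma~\ref{lem:nofinite}''. That lemma's contradiction rests on two pillars: (i) the clusters obtained from translates by $2r$ are pairwise distinct because each one's trace on the column is confined to an interval of length $2r$; and (ii) each offending cluster is forced to place more than $L=8Kr/q$ points on $\parv Q_{i}\times[-N,N]$, which comes from $\PP(F_{i})\to0$ along the exhaustion $Q_{i}$. For clusters meeting the column at infinitely many heights, (i) fails by definition (many ``thin'' heights can lie in one cluster, so counts do not add), and by fixing a single cut $Q$ you give up the mechanism behind (ii): with a fixed $Q$ an infinite cluster typically meets $\parv Q\times\ZZ$ only $O(1)$ times per $Q$-piece, so in a window of length $\ell$ the number of incidences you can actually force is of order (number of thin heights)/(typical piece height), far below the available room $K\ell$; no contradiction follows, and shrinking $c_{0}$ only shrinks the set of thin heights without increasing the incidence count. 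Your fallback remark (``a cluster carrying a positive fraction of thin heights in a long window has density $\gtrsim c_{0}$ there, hence is not thin'') conflates density in one window with the liminf over window sizes and in any case produces column points, not boundary incidences. The paper's proof of this lemma is of a genuinely different nature: it extracts trifurcation points by the Burton--Keane modification \cite{BK89}, uses Lemma~\ref{lem:nofinite} together with the mass transport principle on $\ZZ$ to show each branch contains further trifurcations, builds a $3$-regular tree of trifurcation points as in \cite{BLPS99}, and contradicts the linear growth of $\ZZ$; that structure is what replaces the per-cluster density count, which --- as the example above shows --- cannot by itself distinguish one sparse cluster (possible) from infinitely many (to be excluded).
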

\begin{proof} The idea is as follows: we construct a 3-regular tree of
trifurcation points in $\{v\}\times\ZZ$ (as was done in
\cite{BLPS99}), and show that this contradicts the amenability of
$\ZZ$. Let us give the details.

We first show that there are trifurcation points. Following \cite{BK89} we note that if there are infinitely-many
clusters that intersect $\{v\}\times\ZZ$, then there are
trifurcation points. Indeed, for some $r\ge 1$
there is positive probability that three different infinite components
intersect $\{v\}\times [-r,r]$. Let $y_1,y_2,y_3$ be three different
points in $\{v\}\times [-r,r]$ which are connected to infinity by
three simple open paths which do not intersect $\{v\}\times [-r,r]$
again and are contained in three different components. Now modify the
environment as follows: open all edges of $\{v\}\times [-r,r]$ and
close all edges of $\{e\}\times[-r,r]$ for all edges $e\ni v$ except
the three edges which connect each $y_i$ to the next vertex in
$\gamma_i$. It is clear that after this modification one of the $y_i$
(the middle one) is a trifurcation point. We showed that there is positive probability that
there is a trifurcation point in $\{v\}\times [-r,r]$ and hence by
translation invariance the probability that $(v,n)$ is a trifurcation
point is positive for any $n$.

We remark that, from lemma \ref{lem:nofinite} we can deduce that for
any trifurcation point, each of the clusters one would get by removing
the trifurcation point must intersect $\{v\}\times\ZZ$ at
infinitely-many vertices.

We now follow \cite[\S 4]{BLPS99}, which shows that under these conditions
one may find a $3$-regular tree of trifurcation points. Since
\cite{BLPS99} explains the argument clearly, here we will brief. The
first step is 
\begin{clm*}If $(v,n)$ is a trifurcation point then each infinite
  cluster left after removal of $(v,n)$ has at least one other
  trifurcation point $(v,m)$.
\end{clm*}
\begin{proof} Define a mass transport function $M(n,m)$ for any $m$, $n\in\ZZ$ as
  follows: $M(m,n)=1$ if $(v,n)\leftrightarrow(v,m)$ and if $(v,n)$ is
  the unique closest trifurcation point to $(v,m)$ (the distance is the graph
  distance on the cluster). Let $M=0$ otherwise. Then
  each $m$ sends at most 1 unit of mass, and by the mass transport
  principle the expected amount of mass received by $n$ should also be
  no more than
  1 (we are using the mass transport principle on $\ZZ$ here). But the negation of the statement of the claim means that $n$
  receives an infinite amount of mass (here is where we use the remark
  above that each of the three clusters remaining after removal of
  $(v,n)$ is not just infinite, but intersects $\ZZ$ at
  infinitely-many vertices), so this must happen with probability 0. 
\end{proof}
As promised, we now construct an auxiliary
graph $T$ over the trifurcation points as follows: $m$ will be connected
to $n$ (denoted by $m\sim n$) if both are trifurcation points and if $(v,m)$ is the closest
trifurcation point to $(v,n)$ in one of the clusters remaining after
removal of $(v,n)$, or vice versa. Again ``closest'' means in the
graph distance, and we break ties by adding i.i.d.\ variables $X_n$
uniform in $[0,1]$ and choosing the one with the larger $X$. As in
\cite{BLPS99}, $T$ is a 3-regular tree.

Finally we derive a contradiction to the amenability of $\ZZ$. Denote $q=\PP(\textrm{0 is a}\linebreak[1]\textrm{trifurcation})$. Let $N$ be some
parameter and define the event $E(N)$ that $0$ is a trifurcation point
and in additional the three $v\sim 0$ satisfy 
$v\in[-N,N]$. Taking $N$ to be sufficiently large one may assume that
$\PP(E(N))>\frac34 q$. We fix $N$ at this value (and remove it from the
notation $E(N)$, so that we just denote it by $E$ from now). Denote
$E_n$ the translation of $E$ by $n$ i.e.\ the event that $n$ is a
trifurcation point with the condition on the neighbours as above.

Using ergodicity we know that as $r\to\infty$, the number of
trifurcation points in $[-r,r)$ is $2r(q+o(1))$, while the number of
$E_n$ for $n\in[-r,r)$ is $>2r\cdot \frac34 q$. Denote by $A$ the set of
these trifurcation points. Since $T$ is a 3-regular tree we know that
\[
|\{v\in T\setminus A\textrm{ such that }\exists w\in A, w\sim v\}|\ge
|A|+2.
\]
But this is clearly impossible, since any such $v$ must be in
$[-r-N,r+N]$, and there are $< 2N+2rq(\frac14 +o(1))$ trifurcation points
in this interval which are not in $A$. This is a contradiction and
the lemma is proved. With lemma \ref{lem:nofinite}, and since $v$ was
arbitrary, the theorem has been demonstrated.
\end{proof}

\subsection*{Acknowledgements}
We wish to thank Alain-Sol Sznitman for his question and Remco van der
Hofstad for help with the proof of lemma \ref{lem:Remco}. Both authors
partially supported by the Israel Science Foundation.

\end{document}